\documentclass[12pt]{amsart}
\usepackage{listings}
\usepackage{verbatim}
\usepackage{amsmath}
\usepackage{enumerate}
\usepackage{amssymb}
\usepackage{color}
\usepackage{url}
\usepackage{graphicx}
\usepackage{fullpage}
\usepackage{hyperref}
\usepackage{tikz}
\usepackage{CJKutf8}
\newtheorem{thm}{Theorem}[section]
\newtheorem{lem}[thm]{Lemma}
\newtheorem{mydef}[thm]{Definition}

\usepackage{listings}
\usepackage{appendix}

\usetikzlibrary{matrix}

\usepackage{xtab}
\usepackage{float}
\usepackage{color,longtable}

\newcommand\+{\;\lower\plusheight\hbox{$+$}\;}
\newcommand\lldots{\;\lower\plusheight\hbox{$\cdots$}\;}

\setbox0=\hbox{$+$}
\newdimen\plusheight
\plusheight=\ht 0 \setbox0=\hbox{$-$}
\newdimen\minusheight
\minusheight=\ht0

\setbox0=\hbox{$\cdots$}
\newdimen\cdotsheight
\cdotsheight=\plusheight

\begin{document}

\title{TRIBONACCI NUMBERS THAT ARE PRODUCTS OF TWO LUCAS NUMBERS.}
\author{
Ama Ahenfoa Quansah}

\address{
Department of Mathematics, Oklahoma State University, Stillwater, Oklahoma 74075,USA}
\email{ama.quansah@okstate.edu}

\keywords{}

\maketitle
\begin{abstract}
Let $T_{k}$ be the $k^{\textrm{th}}$ Tribonacci number and $L_{n}$ be the $n^{\textrm{th}}$ Lucas number defined by their respective recurrence relation $T_{k}=T_{k-1}+T_{k-2}+T_{k-3}$ and $L_{n}=L_{n-1}+L_{n-2}$.
In this study, we solve the Diophantine equation
$T_{k} = L_{m}L_{n}$ for positive integer unknowns $m$, $n$, and $k$ and prove our results.
\end{abstract}
\noindent
\numberwithin{equation}{section}
\allowdisplaybreaks


\section{Introduction}
Let $\{T_n\}_{n \geq 0}$ and $\{L_n\}_{n \geq 0}$ be the $n^{\textrm{th}}$ terms of the Tribonacci and Lucas numbers respectively defined via the following recurrence relation
\[
T_n = \begin{cases}
    0 & \textrm{ when } n=0\\
    1 & \textrm{ when } n=1,2\\
    T_{n-1} + T_{n-2} + T_{n-3} & \textrm{ when } n\ge 3
\end{cases}
\quad \textrm{ and } \quad
L_n = \begin{cases}
    2 & \textrm{ when } n=0\\
    1 & \textrm{ when } n=1\\
    L_{n-1} + L_{n-2} & \textrm{ when } n\ge 2.
\end{cases}
\]

The Tribonacci numbers is a third-order integer sequence satisfying  
\[
X^3 - X^2 - X - 1 = (X - \gamma)(X - \delta)(X - \bar{\delta}).
\]
This equation is called the \emph{characteristic equation}, where
\[
\gamma = \frac{1 + r_1 + r_2}{3}, \quad \delta = \frac{2 - (r_1 + r_2) + i \sqrt{3}(r_1 - r_2)}{6},
\]
with
\[
r_1 = \sqrt[3]{19 + 3\sqrt{33}} \quad \text{and} \quad r_2 = \sqrt[3]{19 - 3\sqrt{33}}.
\]
The Lucas numbers is a second-order integer sequence satisfying $x^2 - x - 1 = 0$. By considering this algebraic equation with the mentioned initial conditions, one can develop the following Binet’s formulas for all natural numbers $n$. The Binet's formulas allows us compute Tribonacci and Lucas numbers directly without needing the recursive relation. The Binet formula for the Tribonacci sequence is
\begin{equation}
\label{Binet Trib}T_n = a \gamma^{n} + b \delta^{n} + \bar{b} \bar{\delta}^{n} \quad \quad \text{for all } n \geq 0,
\end{equation}
where
\[
a = \frac{5 \gamma^{2} - 3 \gamma - 4}{22} \quad \text{and} \quad b = \frac{5 \delta^2 - 3 \delta - 4}{22},
\]
and that of the Lucas is \begin{equation}
\label{Binet Luc} L_{n}=\alpha^{n}+\beta^{n}
\end{equation}
  where $\alpha=\frac{1+\sqrt{5}}{2}$ and $\beta=\frac{1-\sqrt{5}}{2}$ are the roots of the characteristic equation of the Lucas sequence.\\
The minimal polynomial of $a$ with integer coefficients is $44X^3 - 2X - 1$, with zeros $a$, $b$, $\bar{b}$ and $\max(|a|, |b|, |\bar{b}|) < 1$.
The following numerical estimates will be useful
\begin{align}
\label{gamma estimate} 1.83 & < \gamma < 1.84; \\
\label{2} 0.73 & < |\delta| = 
\gamma^{-1/2} < 0.74;\\
0.33 & < |a| < 0.34;\\
0.25 & < |b| < 0.27.
\end{align}
For $n \geq 1$, we denote $e(n) := T_n - a \gamma^n$,so we have
\begin{equation}
\label{3}
|e(n)| < \gamma^{-\frac{n}{2}}.
\end{equation}
Furthermore,
\begin{equation}
\label{4}\gamma^{n-2} \leq T_n \leq \gamma^{n-1} \quad \quad \text{for all } n \geq 1.
\end{equation}
Let \( K \) be a number field, and denote its degree over \( \mathbb{Q} \) by \( d_K = [K : \mathbb{Q}] \). Here, $d_{\mathbb{Q}(\gamma)} = 3$ and $d_{\mathbb{Q}(\alpha)} = 2$, which implies that \( \mathbb{Q}(\gamma) \ne \mathbb{Q}(\alpha) \). Moreover, since \( \mathbb{Q}(\gamma) = \mathbb{Q}(a) \), it follows that the elements \( \gamma \), \( \alpha \), and \( a \) are real and lie in the field \( K = \mathbb{Q}(\gamma, \alpha) \), whose degree over \( \mathbb{Q} \) is \( d_K = 6 \).

We solve the Diophantine equation 
\begin{equation}
\label{Trib k} T_k = L_mL_n
\end{equation}
in positive unknowns $m$, $n$, and $k$.
We now state our main theorem which generlizes previous reuslts in the works of \cite{dacsdemir2023fibonacci, luca2023tribonacci}

\section{Statement of the Main Theorem}
\begin{thm}\label{new thm}
Let $k,m$ and $n$ be non-zero integers.
Then, \eqref{Trib k} is satisfied for the triples 
\begin{equation}
(k,m,n)\in \{(1,1,1), (2,1,1), (4,1,3), (5,1,4), (8,3,5)\}
\end{equation}
\end{thm}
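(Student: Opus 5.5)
Without loss of generality I assume $m\le n$ and follow the standard Baker--Davenport scheme: linear forms in logarithms give an explicit upper bound on $k$, and a reduction step brings it down to a range that can be checked by computer. The first step is to link $k$ to $m+n$. Using \eqref{4} and $\alpha^{r-2}\le L_r\le \alpha^{r}$ (with $L_r\ge\alpha^{r-1}$ for $r\ge 1$), the equation \eqref{Trib k} gives
\[
\gamma^{k-2}\le \alpha^{m+n} \quad\text{and}\quad \alpha^{m+n-2}\le \gamma^{k-1}.
\]
Since $\log\alpha/\log\gamma\approx 0.79$, this yields $k<m+n+2$ and $k>0.79(m+n)-1$, roughly. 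For small ranges, say $k\le 200$ and $n\le 200$, a direct computer search confirms that the only solutions are $(1,1,1),(2,1,1),(4,1,3),(5,1,4),(8,3,5)$. Here the solutions with $k=1,2$ come from $T_1=T_2=1=L_1^2$. From now on I assume $k>200$.

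\textbf{First linear form.} Write \eqref{Trib k} with Binet's formulas \eqref{Binet Trib} and \eqref{Binet Luc}:
\[
a\gamma^k-\alpha^{m+n} = -e(k)+\alpha^m\beta^n+\alpha^n\beta^m+\beta^{m+n}.
\]
Dividing by $\alpha^{m+n}$, and using \eqref{3} together with $|\beta|<1$, gives
\[
\Lambda_1:=\left|a\gamma^k\alpha^{-(m+n)}-1\right|<\frac{c}{\alpha^{n-m}}
\]
for a small absolute constant $c$.

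\begin{itemize}
\item $\Lambda_1\neq0$. Otherwise $a\gamma^k=\alpha^{m+n}$. Applying the nontrivial automorphism of $K$ that sends $\alpha\mapsto\beta$ and fixes $\mathbb{Q}(\gamma)$ gives $\alpha^{m+n}=\beta^{m+n}$, which is impossible since $|\alpha|>1>|\beta|$.
\item Matveev's theorem applies in $K$ with $d_K=6$ and $t=3$. The heights are $h(a)=\tfrac13\log 44$, $h(\gamma)=\tfrac13\log\gamma$ and $h(\alpha)=\tfrac12\log\alpha$, and we may take $B=k+2$.
\end{itemize}
This yields $(n-m)\log\alpha<C_1(1+\log k)$ with $C_1\approx 10^{13}$.

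\textbf{Second linear form.} Regroup as
\[
a\gamma^k-\alpha^n(\alpha^m+\beta^m)=-e(k)+\beta^n L_m,
\]
and divide by $\alpha^n L_m$. This gives
\[
\Lambda_2:=\left|a\gamma^k\alpha^{-n}L_m^{-1}-1\right|<\frac{c'}{\alpha^{n}}.
\]
The height of $L_m$ is $\log L_m\le m\log\alpha$, and $m\le n$ is controlled by $k$, so in Matveev's theorem we take $A_3$ of size $O(m)$.

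\begin{itemize}
\item $\Lambda_2\neq0$. Otherwise $a\gamma^k=\alpha^nL_m$. The same conjugation argument gives $\alpha^n=\beta^n$, which is again impossible.
\item Matveev's theorem then gives $n\log\alpha<C_2\,m(1+\log k)^2$.
\end{itemize}

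\textbf{Absolute bound.} Here I need a bound on $m$ in terms of $\log k$. The problem is that the first linear form only bounds $n-m$. I therefore use a third form with $\alpha^m$ isolated, namely $\left|a\gamma^k\alpha^{-m}L_n^{-1}-1\right|<c''\alpha^{-m}$. Its Matveev height is of order $n$, which in turn is bounded via $n-m$ and $m$. Combining the three inequalities gives $m,n\ll(\log k)^{3}$, so $k\ll(\log k)^3$. This gives an absolute bound, something like $k<10^{45}$.

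\textbf{Reduction.} Apply the Baker--Davenport / de Weger reduction with LLL to
\[
\Gamma_1=k\log\gamma-(m+n)\log\alpha+\log a .
\]
Taking $M=10^{45}$, this should reduce $n-m$ to $\le 150$ or so. For each fixed value of $n-m$, the coefficient $L_m$ is replaced by the known constant $(1+\alpha^{-(n-m)}\ldots)$, i.e. I rewrite $\Gamma_2$ as a form in $\log\gamma$ and $\log\alpha$ with the now-known constant $\log\bigl(a/(1+\alpha^{-2m}\ldots)\bigr)$. Reduce again to get $n\le 200$ or so. Then $k<m+n+2$ lies in the range already searched, which finishes the proof.

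\textbf{Main obstacle.} I expect the real difficulty to be the third step, the bootstrap that bounds $m$. If it proves awkward, a cleaner alternative is to reduce first: get $n-m$ small by reducing $\Gamma_1$, then handle each small value of $n-m$ with a two-logarithm form whose constant is explicit. Because $\alpha^{-2m}$ dies exponentially in that form, the final reduction should close the argument.
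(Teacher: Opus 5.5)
There is a genuine gap in your first linear form, and the rest of the plan inherits it. Dividing your identity by $\alpha^{m+n}$ gives
\[
a\gamma^k\alpha^{-(m+n)}-1=-e(k)\alpha^{-(m+n)}+(\beta/\alpha)^{n}+(\beta/\alpha)^{m}+(\beta/\alpha)^{m+n}.
\]
Since $|\beta/\alpha|=\alpha^{-2}$, the term coming from $\alpha^{n}\beta^{m}$ has size exactly $\alpha^{-2m}$. So the correct estimate is $|\Lambda_1|<4\alpha^{-2m}$, not $c\,\alpha^{-(n-m)}$.

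In fact the identity shows that $|\Lambda_1|$ is of exact order $\alpha^{-2m}$. For $m=1$ it is about $\alpha^{-2}\approx 0.38$, however large $n$ is. Your inequality is therefore equivalent to $n\le 3m+O(1)$, and nothing in the argument provides that. Consequently $(n-m)\log\alpha<C_1(1+\log k)$ is not established. Both the reduction of $\Gamma_1$ ``to make $n-m$ small'' and your fallback plan rest on this same false inequality.

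The proposed third form does not repair this. Its height term involves $L_n$, which is of order $n$, so Matveev only yields $m\ll n\log k$. That is weaker than the trivial $m\le n$. The three inequalities $n-m\ll\log k$, $n\ll m(\log k)^2$ and $m\ll n\log k$ are all satisfied by $m=n$ arbitrarily large, so no absolute bound on $k$ follows.

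What you are missing is that the correct right-hand side for $\Lambda_1$ already gives what you need. Matveev then yields $2m\log\alpha<C_1(1+\log k)$, i.e.\ $m\ll\log k$. Your second form (whose error is in fact $\ll\alpha^{-2n}$) then gives $n\ll m\log k\ll(\log k)^2$, and $k<m+n+2$ gives an absolute bound. This is exactly the paper's route: Lemma~\ref{Lemma 3.3.2a} bounds $m$, and then $\Lambda_2$ bounds $n$. The paper's bound \eqref{Lambda1 in terms of beta and m} should be read as $5.74\,|\beta|^{2m}=5.74\,\alpha^{-2m}$.

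The reduction must follow the same order. First reduce $|k\tau-(m+n)+\mu|<c\,\alpha^{-2m}$, with $\tau=\log\gamma/\log\alpha$ and $\mu=\log a/\log\alpha$, to bound $m$. Then, for each such $m$, the constant $\log(a/L_m)$ in $\Gamma_2$ is explicit, and a second reduction bounds $n$. It is fixing $m$, not fixing $n-m$, that makes this constant known.

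A minor slip: $L_r\le\alpha^r$ fails for even $r$, e.g.\ $L_2=3>\alpha^2$; use $L_r\le\alpha^{r+1}$ instead.
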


\section{Preliminary}

\begin{mydef}
\label{logarithmic height}
Let $\eta$ be an algebraic number of degree $d$ with the minimal polynomial
\[
f(X) := \sum_{j=0}^{d} a_j X^{d-j} = a_0 X^d + a_1 X^{d-1} + \cdots + a_d = a_0 \prod_{i=1}^{d} (X - \eta^{(i)}) \in \mathbb{Z}[X],
\]
where $a_0 > 0$, $a_i$'s are relatively prime integers, and $\eta^{(i)}$ is the $i^{\textrm{th}}$ conjugate of $\eta$.

The \textbf{logarithmic height} of $\eta$, denoted by $h(\eta)$, is defined by
\[
h(\eta) = \frac{1}{d} \left( \log |a_0| + \sum_{i=1}^{d} \log \left(\max \left\{ |\eta^{(i)}|, 1 \right\}\right) \right).
\]
\end{mydef}

We now record some basic properties of the logarithmic height.
For algebraic numbers $\eta, \gamma$, and any integer $s$, we have
\begin{align*}
h(\eta\pm \gamma) &\leq h(\eta) + h(\gamma) + \log 2\\
h(\eta\gamma^{\pm 1})&\leq h(\eta)+h(\gamma),\\
h(\eta^s) &= |s| h(\eta) \quad (s \in \mathbb{Z}).
\end{align*}

In what follows we often write $\mathbb{F}$ to denote a real number field of degree $D$ over $\mathbb{Q}$.
This means $\mathbb{F}$ is a finite extension of $\mathbb{Q}$ consisting entirely of real numbers, where every element of $\mathbb{F}$ can be expressed as a linear combination of $D$ basis elements with coefficients in $\mathbb{Q}$.

\begin{lem}
\label{Lemma 2}
Let $n$ be a positive integer.
Then,
\begin{align*}
\alpha^{n-2} & \leq F_n \leq \alpha^{n-1}\\
\alpha^{n-1} & \leq L_n \leq 2\alpha^n \\
|\beta|^{-(n-2)} = |\beta^{-(n-2)}| & \leq F_n \leq |\beta^{(n-1)}| = |\beta|^{-(n-1)}\\
|\beta|^{-(n-1)} = |\beta^{-(n-1)}| & \leq L_n \leq |\beta^{-(n+1)}| = |\beta|^{-(n+1)}
\end{align*}

\end{lem}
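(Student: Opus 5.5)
The plan is to prove all four chains by strong induction on $n$, using only the identity $\alpha^2=\alpha+1$ together with the recurrences. The Fibonacci numbers are the standard sequence $F_0=0$, $F_1=1$, $F_n=F_{n-1}+F_{n-2}$.

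\textbf{Reduction.} The first observation is that $\alpha\beta=-1$, so $|\beta|=\alpha^{-1}$ and $|\beta|^{-s}=\alpha^{s}$ for every integer $s$. Hence the third line of the lemma is literally the first line rewritten. The fourth line reads
\[
\alpha^{n-1}\le L_n\le \alpha^{n+1}.
\]
Since $\alpha^{n+1}=\alpha\cdot\alpha^{n}<2\alpha^n$, its upper bound implies the upper bound $L_n\le 2\alpha^n$ of the second line. So it suffices to prove two statements: $\alpha^{n-2}\le F_n\le\alpha^{n-1}$, and $\alpha^{n-1}\le L_n\le\alpha^{n+1}$ for all $n\ge1$.

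\textbf{Base cases.} These are checked directly using $1.61<\alpha<1.62$.
\begin{itemize}
\item For $F$: $\alpha^{-1}\le F_1=1\le \alpha^0$ and $\alpha^0\le F_2=1\le\alpha$.
\item For $L$: $\alpha^0\le L_1=1\le\alpha^2$ and $\alpha\le L_2=3\le\alpha^3\approx 4.236$.
\end{itemize}

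\textbf{Inductive step.} Suppose $n\ge3$ and the bounds hold for $n-1$ and $n-2$. For the Fibonacci lower bound,
\[
F_n=F_{n-1}+F_{n-2}\ge \alpha^{n-3}+\alpha^{n-4}=\alpha^{n-4}(\alpha+1)=\alpha^{n-4}\alpha^2=\alpha^{n-2}.
\]
The Fibonacci upper bound follows identically with every exponent shifted by one. For the Lucas numbers the same computation gives
\[
L_n\ge \alpha^{n-2}+\alpha^{n-3}=\alpha^{n-1}
\quad\text{and}\quad
L_n\le \alpha^{n}+\alpha^{n-1}=\alpha^{n+1}.
\]
This closes the induction.

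\textbf{Main obstacle.} There is no real obstacle. The only point requiring care is the bookkeeping that identifies the $\beta$-forms with the $\alpha$-forms via $|\beta|=\alpha^{-1}$. One also has to read the third line's upper bound $|\beta^{(n-1)}|$ as $|\beta|^{-(n-1)}$, as its right-hand equality indicates; otherwise it would be false. If one prefers not to induct, the bounds can also be obtained from the Binet formulas $F_n=(\alpha^n-\beta^n)/\sqrt5$ and $L_n=\alpha^n+\beta^n$ together with $|\beta^n|\le|\beta|<1$. However, small $n$ would then still need separate checking, so the induction above is the cleaner route.
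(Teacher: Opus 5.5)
Your proof is correct and follows the same route as the paper: induction on $n$ with base cases $n=1,2$ and the identity $\alpha+1=\alpha^2$. It is applied to $\alpha^{n-2}\le F_n\le\alpha^{n-1}$ and, via $|\beta|^{-1}=\alpha$, to $\alpha^{n-1}\le L_n\le\alpha^{n+1}$. You also derive the second and third lines explicitly from these (using $\alpha<2$ and $|\beta|=\alpha^{-1}$), which the paper leaves implicit, and you correctly flag the typo $|\beta^{(n-1)}|$ in the third line.
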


\begin{proof} 
The above lemma is taken from \cite[Lemma 2]{dacsdemir2023fibonacci}. We provide a proof of the first and last inequalities by induction.

We first prove
\[\alpha^{n-2}\leq F_{n}\leq\alpha^{n-1}.\]
where $\alpha=\frac{1+\sqrt{5}}{2}\approx 1.618$. 
We verify the inequality for the base cases $n=1,2$
\begin{align*}
\text{For }n=1, \quad & \alpha^{-1}\approx 0.618 \leq F_{1}=1\leq \alpha^{0}=1\\
\text{For }n=2, \quad & \alpha^{0}=1\leq F_{2}=1\leq \alpha^{1}\approx 1.618.
\end{align*}
Next we assume $n\geq 3$
\begin{equation}
\label{inductive 1} \alpha^{n-2}\leq F_{n}\leq \alpha^{n-1} \quad \textrm{and} \quad \alpha^{n-3}\leq F_{n-1}\leq \alpha^{n-2}.
\end{equation}
We need to show that
\[
\alpha^{(n+1)-2}\leq F_{n+1}\leq \alpha^{(n+1)-1}=\alpha^{n-1}\leq F_{n+1}\leq \alpha^{n}.
\]
We have that $F_{n+1}=F_{n}+F_{n-1}$ and from the inequalities \eqref{inductive 1}
\[F_{n}\geq \alpha^{n-2}\quad \textrm{and} \quad F_{n-1}\geq \alpha^{n-3}.
\]
Therefore,
\begin{align*}
F_{n+1}=F_{n}+F_{n-1}&\geq\alpha^{n-2}+\alpha^{n-3}\\
F_{n+1}&\geq \alpha^{n-3}(\alpha +1)
\end{align*}
Note that
$\alpha +1=\frac{1+\sqrt{5}}{2}+1=\frac{3+\sqrt{5}}{2}$  and $\alpha^{2}=\frac{3+\sqrt{5}}{2}$.
Thus 
\begin{equation}
\label{alpha + 1}
\alpha +1=\alpha^{2}
\end{equation}
Using this we can conclude that
\[
F_{n+1}\geq \alpha^{n-3}\cdot\alpha^{2}=\alpha^{n-1}.
\]
From the inequalities \eqref{inductive 1} we know that
\[
F_{n}\leq \alpha^{n-1},\quad F_{n-1}\leq \alpha^{n-2}.
\]
Since, $F_{n+1}=F_{n}+F_{n-1}\leq \alpha^{n-1}+\alpha^{n-2}$ we obtain that
\[
F_{n+1}\leq \alpha^{n-2}(\alpha +1).
\]
From \eqref{alpha + 1} we know that $F_{n+1}\leq \alpha^{n-2}\cdot\alpha^{2}\leq\alpha^{n}$.
So, we conclude that
\[
\alpha^{n-1}\leq F_{n+1}\leq \alpha^{n}.
\]
Thus, if the inequality holds for $n$, it holds for $n+1$.

Next we prove the last inequality.
Recall that $\beta=\frac{1-\sqrt{5}}{2}\approx 0.618$ and observe that $|\beta|^{-1}=\alpha$, so the inequality can becomes 
\[
\alpha^{n-1}\leq L_{n}\leq \alpha^{n+1}.
\] 
We verify the base cases $n=1,2$
\begin{align*}
\textrm{For } n=1, & \quad 1\leq L_{1}=1\leq \alpha^{2}\approx 2.618\\
\textrm{For } n=2, & \quad \alpha \approx  1.618\leq L_{2}=3\leq\alpha^{3} \approx 4.2361.
\end{align*}
We now show the inequality is satisfied for $n\geq 2$:
\begin{equation}
\label{inductive 2}
\alpha^{n-1}\leq L_{n}\leq \alpha^{n+1} \quad \textrm{and} \quad \alpha^{n-2}\leq L_{n-1}\leq \alpha^{n}
\end{equation}
We now obtain bounds for the $n+1^{\textrm{th}}$  Lucas number in terms of $\alpha$.
\begin{align*}
L_{n+1}  =L_{n}+L_{n-1} & \geq \alpha^{n-1}+\alpha^{n-2} \quad \textrm{using the inequalities in \eqref{inductive 2}} \\ 
& \geq \alpha^ {n-2}(\alpha+1) \\
& \geq\alpha^{n-2}\cdot \alpha^{2}=\alpha^{n} 
\quad \textrm{using \eqref{alpha + 1}}.
\end{align*}
Similarly,
\begin{align*}
    L_{n+1}  =L_{n}+L_{n-1} & \leq \alpha^{n+1}+\alpha^{n} \quad \textrm{using the inequalities in \eqref{inductive 2}} \\ 
    & \leq \alpha^{n}(\alpha+1) \\
    & \leq\alpha^{n}\cdot \alpha^{2}=\alpha^{n+2} 
    \quad \textrm{using \eqref{alpha + 1}}. \qedhere
\end{align*}

\end{proof}

\begin{mydef}
\label{Lambda def}
Let $\eta_1, \eta_2, \ldots, \eta_s$ be positive algebraic numbers in the real number field $\mathbb{F}$ of degree $D$ and let $b_1, b_2, \ldots, b_s$ be non-zero rational numbers.
Define
\[
\Lambda := \eta_1^{b_1} \eta_2^{b_2} \cdots \eta_s^{b_s} - 1 \quad \text{and} \quad B := \max \{|b_1|, |b_2|, \ldots, |b_s|\}.
\]
Further define $A_1, A_2, \ldots, A_s$ be the positive real numbers such that
\[
A_j \geq \max \left\{ Dh(\eta_j), \log |\eta_j|, 0.16 \right\} \quad \text{for all} \; j = 1, 2, \ldots, s.
\]
\end{mydef}

With notation as above, we state Matveev's theorem \cite{dacsdemir2023fibonacci} and a lemma by Dujella--Peth\"{o} \cite{dacsdemir2023fibonacci} which is used crucially in the remainder of this work.

\begin{thm}[Matveev] \cite[Theorem 1]{dacsdemir2023fibonacci}
\label{Matveev}
With notation introduced above, the following inequality holds for any non-zero algeraic number $\Lambda$ in the real field $\mathbb{F}$
\[
\log |\Lambda| > -1.4 \times 30^{s+3} \times s^{4.5} \times D^2 \times (1 + \log D) \times (1 + \log B) \times A_1 \times A_2 \times \cdots \times A_s.
\]
\end{thm}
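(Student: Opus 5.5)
This statement is not proved in the present paper. It is quoted from the literature: Matveev's explicit lower bound for linear forms in logarithms, in the real-field form with constant $1.4\times 30^{s+3}\times s^{4.5}$. The honest plan is therefore to \emph{reduce} it to Matveev's published theorem rather than to reprove it. First I would pass from $\Lambda$ to the linear form
\[
\Gamma := b_1\log\eta_1+\cdots+b_s\log\eta_s .
\]
Since $\mathbb{F}$ is real and every $\eta_j>0$, the real logarithms are a legitimate choice of branch. If $|\Lambda|\ge 1/2$ there is nothing to prove, because the right-hand side of Theorem~\ref{Matveev} is enormously negative. If $|\Lambda|<1/2$, then $|\Gamma|\le 2|\Lambda|$. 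So a lower bound of the form $\log|\Gamma|>-C(s,D)(1+\log B)A_1\cdots A_s$ transfers to $\Lambda$ at the cost of a $\log 2$, which is absorbed in the constant.

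The lower bound for $\Gamma$ would come from Baker's method in Matveev's refined form. The key steps, in order, are:
\begin{enumerate}[(i)]
\item \textbf{Auxiliary function.} Build an auxiliary polynomial in $s$ variables, via Siegel's lemma, or equivalently an interpolation determinant in Laurent's style. It vanishes to high order along the one-parameter subgroup $t\mapsto(\eta_1^{t},\dots,\eta_s^{t})$ at the points $t=0,1,\dots,T$. The heights $h(\eta_j)$ enter through the parameters $A_j$ of Definition~\ref{Lambda def}.
\item \textbf{Extrapolation.} Assuming $|\Gamma|$ is very small, use Schwarz-lemma and Cauchy estimates to show vanishing at many more points.
\item \textbf{Liouville-type bound.} A nonzero algebraic value of height controlled by $D$ and the $A_j$ cannot be smaller than $e^{-D\,h}$. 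This lets the extrapolation be iterated.
\item \textbf{Contradiction.} Apply a multiplicity or zero estimate (Philippon, or W\"ustholz's version for algebraic groups) on $\mathbb{G}_m^{s}$. This contradicts excessive vanishing unless the $\eta_j$ are multiplicatively dependent; in that case one reduces to fewer logarithms by induction on $s$.
\end{enumerate}
The constant $30^{s+3}s^{4.5}D^2(1+\log D)$ arises from optimizing the parameters. The realness of $\mathbb{F}$ is what saves a factor relative to the complex case.

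The main obstacle is step (iv) together with the Kummer-theoretic descent needed to obtain the factor $(1+\log B)$, rather than $\log B$ times a larger power of $s$, uniformly in the degree. This is precisely Matveev's contribution, and it is far beyond the scope of this paper. In the write-up I would therefore state clearly that Theorem~\ref{Matveev} is taken verbatim from Matveev (as recorded in \cite[Theorem 1]{dacsdemir2023fibonacci}). I would then verify only what the paper actually needs:
\begin{enumerate}[(a)]
\item the field $K=\mathbb{Q}(\gamma,\alpha)$ is real of degree $D=6$;
\item the relevant $\eta_j$ (such as $\gamma$, $\alpha$, and $a$ times a power of $\alpha$) are positive;
\item $\Lambda\neq 0$ in each application, for instance by a Galois-conjugation argument showing $a\gamma^k\neq\alpha^{m+n}$.
\end{enumerate}
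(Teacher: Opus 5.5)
Your proposal agrees with the paper, which gives no proof of Theorem~\ref{Matveev} and simply imports it from \cite[Theorem 1]{dacsdemir2023fibonacci}, exactly as you plan to do. One item is missing from your hypothesis checklist: the exponents $b_j$ must be integers, as in Matveev's theorem, whereas Definition~\ref{Lambda def} says ``non-zero rational numbers''. For such exponents the inequality is false (take $s=D=1$, $\eta_1=27/8$, $b_1=1/3$, so $\Lambda=1/2$ while $1+\log B<0$ makes the right-hand side positive), and this is exactly where your claim that the right-hand side is ``enormously negative'' tacitly uses $B\ge 1$; in the paper's applications the exponents $k$, $-(m+n)$, $-n$, $1$ are integers, so nothing downstream is affected.
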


\begin{lem}[{Dujella--Peth\"{o}}] \cite[Lemma 1]{dacsdemir2023fibonacci}
\label{Dujella Petho}
Let $M$ be a positive integer, $\frac{p}{q}$ be a convergent of the continued fraction of the irrational $\tau$ such that $q > 6M$, and let $A$, $B$, $\mu$ be positive rational numbers with $A > 0$ and $B > 1$. Let $\epsilon = \Vert \mu q\Vert - M\Vert \tau q\Vert$, where $\Vert \cdot \Vert$ is the distance from the nearest integer. If $\epsilon > 0$, then there is no integer solution $(m, n, k)$ of inequality
\[
0 < m\tau - n + \mu < AB^{-k} \textrm{ where }
m \leq M \text{ and }  k \geq \frac{\log (Aq/\epsilon)}{\log B}.
\]
\end{lem}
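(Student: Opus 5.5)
The plan is a direct argument: multiply the inequality by $q$, then use the fact that $p/q$ is a good rational approximation to $\tau$ to trade $m\tau$ for an integer plus a small error. We argue by contradiction. Suppose $(m,n,k)$ is an integer solution with $0 < m\tau - n + \mu < AB^{-k}$, $m \le M$ and $k \ge \log(Aq/\epsilon)/\log B$. (Only $|m| \le M$ is actually used, which holds for the nonnegative $m$ of interest.)

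\textbf{Step 1: the approximation error.} Since $p/q$ is a convergent of $\tau$, we have $|\tau - p/q| < 1/q^2$, so $|q\tau - p| < 1/q$. Because $q > 6M \ge 6$, this gives $|q\tau - p| < 1/2$. Hence $p$ is the integer nearest to $q\tau$, and
\[
\Vert q\tau \Vert = |q\tau - p|.
\]

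\textbf{Step 2: multiply by $q$ and regroup.} We write
\[
q(m\tau - n + \mu) = m(q\tau - p) + (mp - qn) + q\mu .
\]
Set $N := qn - mp \in \mathbb{Z}$. Then
\[
q\mu - N = q(m\tau - n + \mu) - m(q\tau - p).
\]
Taking absolute values and using the hypothesis together with Step 1,
\[
|q\mu - N| < qAB^{-k} + |m|\,\Vert q\tau\Vert \le qAB^{-k} + M\Vert q\tau \Vert .
\]
The first inequality is strict because $0 < m\tau - n + \mu < AB^{-k}$ is strict.

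\textbf{Step 3: pass to the nearest-integer distance and conclude.} Since $N$ is an integer, $\Vert q\mu\Vert \le |q\mu - N|$. Combining with Step 2,
\[
\epsilon = \Vert q\mu\Vert - M\Vert q\tau\Vert < qAB^{-k}.
\]
As $\epsilon > 0$ and $B > 1$, this rearranges to $B^k < Aq/\epsilon$, that is,
\[
k < \frac{\log(Aq/\epsilon)}{\log B}.
\]
This contradicts the assumption $k \ge \log(Aq/\epsilon)/\log B$, so no such integer solution exists.

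The argument is elementary. The only point needing care is Step 1: we must ensure $\Vert q\tau\Vert$ really equals $|q\tau - p|$, and the hypothesis $q > 6M$ supplies this. We must also keep the strict inequality from the hypothesis, so that the boundary case $k = \log(Aq/\epsilon)/\log B$ is excluded as well.
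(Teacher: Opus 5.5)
The paper does not prove this lemma; it only cites it from the literature, so there is no in-paper argument to compare with. Your proof is the standard Dujella--Peth\"{o} argument and it is correct, including the use of the strict inequality to rule out the boundary value $k=\log(Aq/\epsilon)/\log B$.

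Two remarks. First, your restriction to $|m|\le M$ is necessary, not merely convenient. Read literally, with $m$ an arbitrary integer satisfying $m\le M$, the statement is false. For irrational $\tau$ the fractional parts of $m\tau+\mu$ with $m=0,-1,-2,\dots$ are dense in $[0,1)$, so for every $k$ there are $m\le 0$ and $n=\lfloor m\tau+\mu\rfloor$ with $0<m\tau-n+\mu<AB^{-k}$. The original lemma is stated for positive integers $m$, which covers the paper's applications, where the coefficient of $\tau$ is the positive index $k$.

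Second, your Step 1 is correct but logically dispensable. If you take $p$ to be the integer nearest to $q\tau$ from the outset, Steps 2 and 3 go through verbatim for every positive integer $q$. The hypotheses that $p/q$ is a convergent and that $q>6M$ only guarantee $M\Vert q\tau\Vert<M/q<1/6$, which makes $\epsilon>0$ likely in an actual computation.
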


\begin{lem}[\cite{sanchez2014linear}]
\label{Lemma l}
If $l\geq1$, $H>(4l^{2})^{l}$, and $H>L/(\log L )^{l}$, then 
\[
L<2^{l}H(\log H)^{l}.
\]
\end{lem}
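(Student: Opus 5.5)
We argue by contradiction. Assume $l\geq 1$, $H>(4l^{2})^{l}$ and $H>L/(\log L)^{l}$, but $L\geq X$, where
\[
X:=2^{l}H(\log H)^{l}.
\]
The idea is to apply the function $f(x)=x/(\log x)^{l}$ to both $L$ and $X$. Monotonicity of $f$ will give $f(L)\geq f(X)$, and a lower bound $f(X)\geq H$ will then contradict the hypothesis $f(L)<H$.

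\emph{Step 1: monotonicity of $f$.} On $x>1$ we have
\[
f'(x)=\frac{\log x-l}{(\log x)^{l+1}},
\]
so $f$ is increasing for $x>e^{l}$. We check that $X>e^{l}$. Since $4l^{2}\geq 4>e$, we get $H>(4l^{2})^{l}>e^{l}$. In particular $\log H>l\geq 1$, so $(\log H)^{l}\geq 1$ and $X\geq 2^{l}H>e^{l}$. Because $L\geq X>e^{l}$, monotonicity gives $f(L)\geq f(X)$.

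\emph{Step 2: the key estimate $\log X\leq 2\log H$.} This is the main point of the argument. We have
\[
\log X=l\log 2+\log H+l\log\log H,
\]
so the claim is equivalent to $l\log(2\log H)\leq \log H$, that is, to $(2\log H)^{l}\leq H$. Put $u=H^{1/l}$. Then $u>4l^{2}$ and $\log H=l\log u$, so the inequality to prove becomes $2l\log u\leq u$.

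We use the elementary bound $\log u<\sqrt{u}$, valid for all $u>0$. Together with $\sqrt{u}>2l$ (from $u>4l^{2}$), it gives
\[
2l\log u<\sqrt{u}\cdot\sqrt{u}=u.
\]
This proves $\log X\leq 2\log H$.

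\emph{Step 3: conclusion.} From Step 2,
\[
f(X)=\frac{2^{l}H(\log H)^{l}}{(\log X)^{l}}\geq \frac{2^{l}H(\log H)^{l}}{(2\log H)^{l}}=H.
\]
Combining this with Step 1 gives $L/(\log L)^{l}=f(L)\geq f(X)\geq H$. This contradicts the hypothesis $H>L/(\log L)^{l}$. Hence $L<2^{l}H(\log H)^{l}$.

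The only delicate point is Step 2, where the hypothesis $H>(4l^{2})^{l}$ is used exactly to obtain $\sqrt{u}>2l$. Steps 1 and 3 are routine.
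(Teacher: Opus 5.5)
The paper gives no proof of this lemma; it only quotes it from \cite{sanchez2014linear}, so there is no in-paper argument to compare yours with.

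Your proof is correct and self-contained. Each step holds:
\begin{itemize}
\item The derivative computation shows that $x\mapsto x/(\log x)^{l}$ is increasing on $(e^{l},\infty)$.
\item The check $X\geq 2^{l}H>e^{l}$ is valid.
\item Writing $\log X-2\log H=l\log(2\log H)-\log H$ correctly reduces the claim $\log X\leq 2\log H$ to $2l\log u\leq u$ with $u=H^{1/l}>4l^{2}$.
\item The final chain $f(L)\geq f(X)\geq H$ gives the required contradiction.
\end{itemize}

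Your contradiction setup also covers the degenerate case of small $L$, where $(\log L)^{l}$ might not be defined. The assumption $L\geq X$ forces $L>e^{l}>1$, and if $L<X$ there is nothing to prove.

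The one fact you use without justification is $\log u<\sqrt{u}$ for all $u>0$. It follows in one line: $\sqrt{u}-\log u$ attains its minimum $2-2\log 2>0$ at $u=4$.
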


The following result is taken from \cite[Lemma 3]{dacsdemir2023fibonacci}
but the proof is added for convenience of the reader.

\begin{lem}[Ddamulira--Luca--Rakotomalala]
\label{Lemma 3}
Consider the real number $z$ in the interval $\left( -\frac{1}{2}, \frac{1}{2} \right)$.
Then
\[
|z| < 2 |e^z - 1|.
\]
\end{lem}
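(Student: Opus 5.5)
The statement to prove is Lemma~\ref{Lemma 3}: for real $z$ with $|z|<\tfrac12$ we have $|z|<2|e^z-1|$. At $z=0$ both sides vanish, so strictly the inequality needs $z\neq 0$; I will prove it for $0<|z|<\tfrac12$, which is the only case used later (there $\Lambda\neq 0$). The plan is to compare $e^z-1$ with $z$ via the power series, or equivalently via the mean value theorem, and to treat the signs of $z$ separately.

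\textbf{Case $0<z<\tfrac12$.} From the series $e^z-1=z+z^2/2+\cdots$, every term is positive. Hence $e^z-1>z>0$, and so $2|e^z-1|>2z>z=|z|$.

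\textbf{Case $-\tfrac12<z<0$.} Write $z=-t$ with $0<t<\tfrac12$. Then $|e^z-1|=1-e^{-t}$. By the mean value theorem, $1-e^{-t}=t\,e^{-\xi}$ for some $\xi\in(0,t)$. Since $\xi<\tfrac12$, this gives
\[
1-e^{-t}>t\,e^{-1/2}.
\]
Now $e^{-1/2}\approx 0.6065>\tfrac12$, so $1-e^{-t}>t/2$, that is, $|z|=t<2|e^z-1|$.

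Alternatively, one can use the alternating series $1-e^{-t}=t-t^2/2+t^3/6-\cdots\ge t-t^2/2>t-t/4=3t/4$, valid for $t<\tfrac12$, which is again larger than $t/2$.

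The only slightly delicate point is the negative case. There $e^z-1$ is smaller than $z$ in absolute value, so the constant $2$ is genuinely needed. The check $e^{-1/2}>1/2$ (equivalently $e<4$) settles it.
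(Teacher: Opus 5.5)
Your proof is correct. You share the sign split and the positive case with the paper: both use $e^z-1\ge z$ for $z>0$. In the negative case, however, you take a genuinely different route, and it repairs the paper's argument.

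For $x<0$ the paper asserts $e^{x}\le 1+x$, hence $e^x-1\le x<0$, and from this concludes $|x|<2|e^x-1|$. That inequality is false. Convexity gives $e^x\ge 1+x$ for every real $x$, so for $x<0$ one actually has $|e^x-1|\le |x|$, which points the wrong way. The paper's argument also never uses the hypothesis $|z|<1/2$, and that hypothesis cannot be dropped: for $z=-10$ the right-hand side $2|e^{-10}-1|$ is below $2$.

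Your mean value theorem step $1-e^{-t}=te^{-\xi}>te^{-1/2}>t/2$ supplies exactly the quantitative input that is missing. The alternating-series bound $1-e^{-t}\ge t-t^2/2>3t/4$ does the same. Either way you use $|z|<1/2$ precisely where it matters. So your argument is a valid proof, whereas the paper's negative case is not.

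You are also right that $z=0$ has to be excluded, since the strict inequality fails there. This is harmless for the later applications, where $\Lambda_1,\Lambda_2\neq 0$ forces $\Gamma_1,\Gamma_2\neq 0$.
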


\begin{proof}
We consider the cases $x<0$ and $x>0$.\\
\textbf{Case I:} $x>0$\\
For all positive real numbers $x$, it follows from the truncated Taylor series expansion of the exponential function $e^x$ that $e^{x} \geq 1+x$.
Thus,
\begin{align*}
|e^{x}-1|&\geq |x|\\
|x|&\leq |e^{x}-1|\\
2|x|=2&\leq 2|e^{x}-1|\\
x<2x&\leq 2|e^{x}-1|
\end{align*}
Therefore, $|x|<2|e^{x}-1|$.

\textbf{Case II:} $x<0$\\
For negative $x$, the exponential function satisfies $e^{x}\leq 1+x$.
Thus, $e^{x}-1 \leq x$.
Since $e^{x}-1\leq x<0$ we have
\[
|e^{x}-1|=-(e^{x}-1) \quad \textrm{and} \quad |x|=-x
\] 
Furthermore, since $\quad-2(e^{x}-1)\geq-2x$  and $-2x>-x$, we conclude
\[
|x|=-x<-2x\leq 2|e^{x}-1| \implies |x|<2|e^{x}-1|. \qedhere
\]   
\end{proof}

\section{Proof of Main Results}
Let $(k, m, n)$ be a solution of the Diophantine equation \eqref{Trib k}.
Let's suppose $1 \leq m \leq n$ and $k \geq 1$ because $L_1 = T_1 = T_{2}=1$. We assume that $m \geq 2$, because for $m = 1$, we get $T_k = L_n$.\\

Here, we find upper and lower bounds for $k$ in terms of $m$ and $n$. From \eqref{4} and the last equation in Lemma \ref{Lemma 2}, we can see that
\begin{equation}
\label{Tk inequalities}\gamma^{k-2}<T_{k}=L_{m}L_{n}<|\beta|^{-(n+m+2)}\quad \textrm{and} \quad |\beta|^{-(m+n-2)}<\gamma^{k-1}
\end{equation}
Taking logs of the first inequality in \eqref{Tk inequalities} we have,
\begin{align*}
\label{up and low k} (k-2)\log \gamma&<-(n+m+2)\log |\beta|\\
\frac{(k-2)\log \gamma}{\log \gamma}&<\frac{-(n+m+2)\log |\beta|}{\log \gamma}\\
k-2&<\frac{-(n+m+2)\log |\beta|}{\log \gamma}\\
k&<-(n+m+2)\frac{\log |\beta|}{\log \gamma}+2
\end{align*}
From the estimation of $\gamma$ in \eqref{gamma estimate}, we have 
\begin{equation}
\label{upper bound for k}
k<\frac{\log |\beta|}{\log \gamma}(-m-n)+3.8
\end{equation}
Taking logs of the second inequality in \eqref{Tk inequalities} gives,
\begin{align*}
-(m+n-2)\log |\beta|&<(k-1)\log \gamma\\
\frac{-(m+n-2)\log |\beta|}{\log \gamma}&<\frac{(k-1)\log \gamma}{\log \gamma}\\
-(m+n-2)\frac{\log |\beta|}{\log \gamma}&<k-1\\
-(m+n-2)\frac{\log |\beta|}{\log \gamma}+1&<k
\end{align*}

\begin{equation}
\label{lower bound for k}
\frac{\log|\beta|}{\log \gamma}(-m-n)-0.2<k
\end{equation}

Combining the inequalities in \eqref{upper bound for k} and \eqref{lower bound for k}, we find these upper and lower bounds for k
\[\frac{\log |\beta|}{\log \gamma}(-m-n)-0.2<k<\frac{\log |\beta|}{\log \gamma}(-m-n)+3.8\]

Define a non-zero algebraic number $\Lambda_1$ in terms of $a$, $\gamma$, $\beta$, $m$, $n$, and $k$. 
We then use this algebraic number while applying Matveev's Theorem.

Recall the Binet's formulae for the Tribonacci numbers \eqref{Binet Trib} and Lucas numbers \eqref{Binet Luc}
\[
T_{n}=a \gamma^{n} + b \delta^{n} + \bar{b} \bar{\delta}^{n} \hspace{0.5cm}  \text{and} \hspace{0.5cm} L_{n}=\alpha^{n}+\beta^{n}
\]
Suppose the $k^{\textrm{th}}$ Tribonacci number $T_k$ can be written as a product of two Lucas numbers.
Then we can perform the following manipulations

\begin{align*}
T_{k}&=L_{m}L_{n}\\
a \gamma^{k} + b \delta^{k} + \bar{k} \bar{\delta}^{k}&=(\alpha^{m}+\beta^{m})(\alpha^{n}+\beta^{n})\\
\frac{a \gamma^{k} + b \delta^{k} + \bar{k} \bar{\delta}^{k}}{\beta^{m+n}}&=\frac{\alpha^{m+n}+\alpha^{m}\beta^{n}+\alpha^{n}\beta^{m}+\beta^{m+n}}{\beta^{m+n}}\\
a \gamma^{k}\beta^{-(m+n)}+(b \delta^{k} + \bar{k} \bar{\delta}^{k})\beta^{-(m+n)}&= \alpha^{m+n}\beta^{-(m+n)}+\alpha^{m}\beta^{n}\beta^{-(m+n)}+\alpha^{n}\beta^{m}\beta^{-(m+n)}+1\\
|a\gamma^{k} \beta^{-(m+n)}-1|&=\big|-(b\delta^{k}+\bar{b}\bar{\delta^{k}})\beta^{-(m+n)} +\alpha^{m+n}\beta^{-(m+n)}+\alpha^{m}\beta^{n}\beta^{-(m+n)}\big|
\end{align*}
In view of Definition \ref{Lambda def}, we define 
\[
\Lambda_{1}:=|a\gamma^{k} \beta^{-(m+n)}-1|\neq 0
\]
where $\eta_{1}=\gamma$, $\eta_{2}=|\beta|$, $\eta_{3}=a$ and $b_{1}=k$, $b_{2}=-m-n$, $b_{3}=1$.
And, we have 
\begin{equation}
\label{Lambda1 in terms of beta and m}
| \Lambda_1 |< \frac{5.74}{|\beta|^{2m}}.
\end{equation}

Here, we perform algebraic manipulations and set the stage for applying Matveev's theorem.
First, we compute the logarithmic heights of the algebraic numbers $\eta_i$ corresponding to our algebraic number $\Lambda_1$.
Recall the definition of the logarithmic height of an algebraic number from Definition~\ref{logarithmic height}.
Since the minimal polynomials for $\gamma$ and $\beta$  are $x^{3}-x^{2}-x-1$ and $x^{2}-x-1$ respectively, we compute $h(\eta_{1}=\gamma)$, $h(\eta_{2}=|\beta|)$,  and $h(\eta_{3} = a)$ using the above formula. Recall that $d_{\mathbb{K}} = 6$. 
\begin{align*}
h(\eta_{1}) &=\frac{1}{3}\bigg(\log|1|+\log(\max\{|\eta^{(1)}|,1\})+\log(\max\{|\eta^{(2)}|,1\})\bigg)\\
     &=\frac{1}{3}\bigg(0+\log(\max\{\gamma,1\})+\log(\max\{|\beta|,1\})\bigg)\\
     &=\frac{1}{3}\log\gamma<0.204 \quad (\textrm{from the numerical estimate of gamma in \ref{gamma estimate}})
\end{align*}
Similarly,
\[
h(\eta_{2})=\frac{1}{2}\bigg(\log|1|++\log(\max\{|\gamma|,1\})\log(\max\{|\beta|,1\})\bigg)
=\frac{1}{2}\log|\gamma|< 0.303  
\]
For $\eta_{3}=\sqrt{5}$, we fix the minimal polynomial $x-\sqrt{5}$ and use the definition to obtain
\[
h(\eta_{3})=\log|1|+\log(\max\{a,1\}) =\log a <2.172
\]

Then for each algebraic number $\eta_1 = \gamma$, $\eta_2 = |\beta|$, and $\eta_3 = a$ we define positive real numbers $A_{1}$ , $A_{2}$, and $A_{3}$ such that
\[
A_{j}\geq \max \{6h(\eta_{j}), \ |\log \eta_{j}|, \ 0.16\} \quad \textrm{ for } 1\leq j \leq 3
\] 
where we use the fact that $D=6$. More explicitly, $A_{j}=Dh(\eta_{j})$ which get

   \[ A_1=1.23,\quad A_2 =1.82 ,\quad A_3 =13.03\] 
   
From the definition of $B$ in Definition~\ref{Lambda def}, we have
\[
B :=\max \{|b_{1}|, \ |b_{2}|, \ |b_{3}|\} = \max \{k, \ n+m, \ 1 \}=m+n
\] 

We then apply Matveev's Theorem to get an upper bound for $m$ in terms of $n$

\begin{lem}
\label{Lemma 3.3.2a}
The following inequality holds for an algebraic number $Lambda_{1}$,
\[-7.28\times 10^{14}\times \log (m+n)<\log (| \Lambda_{1} | )< \log 5.62- 2m\log |\beta|.
\]
\end{lem}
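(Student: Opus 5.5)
The statement has an upper and a lower bound for $\log|\Lambda_1|$, and I would prove them separately. The upper bound comes from the identity derived just before the definition of $\Lambda_1$. The lower bound comes from Theorem~\ref{Matveev} with $s=3$, $D=6$ and the constants $A_1,A_2,A_3$ already computed.

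\emph{Upper bound.} I would begin from
\[
a\gamma^{k}\beta^{-(m+n)}-1=-(b\delta^{k}+\bar b\bar\delta^{k})\beta^{-(m+n)}+\alpha^{m+n}\beta^{-(m+n)}+\alpha^{m}\beta^{-m}+\alpha^{n}\beta^{-n}.
\]
To make this usable I would rewrite it in the dominant normalisation. Divide $T_k=L_mL_n$ by $\alpha^{m+n}$ and isolate the main term, so that the leftover terms are $\beta^{m}\alpha^{-m}$, $\beta^{n}\alpha^{-n}$, $(\beta/\alpha)^{m+n}$ and $e(k)\alpha^{-(m+n)}$. Since $|\beta/\alpha|=|\beta|^{2}$, each of these is at most a constant times $|\beta|^{2m}$, using $m\le n$. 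For $e(k)$ I would use \eqref{3}, namely $|e(k)|<\gamma^{-k/2}<1$, together with $\alpha^{-(m+n)}\le\alpha^{-2m}=|\beta|^{2m}$. Adding the constants should give a total below $5.62$; this may require absorbing small cases by hand, since the displayed estimate \eqref{Lambda1 in terms of beta and m} carries the constant $5.74$. Taking logarithms then gives $\log|\Lambda_1|<\log 5.62+2m\log|\beta|$. This is the right-hand side of the lemma once the sign convention on $\log|\beta|$ is fixed, i.e.\ read as $\log 5.62-2m\log\alpha$.

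\emph{Lower bound.} First I would check that $\Lambda_1\neq 0$. If $a\gamma^{k}=\alpha^{m+n}$ (up to sign), apply the nontrivial automorphism of $K$ that fixes $\mathbb{Q}(\gamma)$ and sends $\alpha\mapsto\beta$. The left side is unchanged while the right side becomes $\beta^{m+n}$, whose absolute value is less than $1$. This is impossible, because $a\gamma^k\geq a\gamma>0.6$ would have to equal both $\alpha^{m+n}>1$ and a quantity below $1$ in absolute value.

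Then I would apply Theorem~\ref{Matveev} with $B=m+n$. By the inequalities bounding $k$ above, $k$ is at most roughly $0.8(m+n)+3.8$, so $B$ can be taken as $m+n$ up to harmless adjustments. The bound becomes
\[
\log|\Lambda_1|>-1.4\cdot 30^{6}\cdot 3^{4.5}\cdot 36\cdot(1+\log 6)\cdot(1+\log(m+n))\cdot A_1A_2A_3 .
\]
Finally, $1+\log(m+n)\le 2\log(m+n)$ whenever $m+n\ge 3$, which holds since $m\ge2$. Multiplying out the constants should land near the stated $7.28\times10^{14}$.

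The main obstacle is this constant bookkeeping. The heights as written in the paper are muddled: $h(a)=\tfrac13\log 44$-type, not $\log a$, so the $A_j$ need rechecking. The final numerical constant depends delicately on those values, and I would recompute $A_3$ from the minimal polynomial $44X^3-2X-1$ before trusting the product.
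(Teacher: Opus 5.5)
Your proposal is correct and follows the paper's route (Binet identity for the upper bound, Matveev's theorem with $s=3$, $D=6$ for the lower bound). It also supplies what the paper's own argument lacks: the normalisation by $\alpha^{m+n}$ (the literal $a\gamma^{k}|\beta|^{-(m+n)}-1$ has size about $\alpha^{2(m+n)}$ and is not small), after which your four error terms sum to less than $4|\beta|^{2m}$ with no small cases needed; the Galois argument for $\Lambda_1\neq 0$; and correct heights, notably $h(a)=\tfrac{1}{3}\log 44$. Your caution about the constant is justified and the arithmetic does close: with the paper's $A_1A_2A_3\approx 29.2$ the estimate $1+\log(m+n)\le 2\log(m+n)$ would give only about $8.4\times 10^{14}\log(m+n)$, but with $A_1=1.22$, $A_2=1.45$, $A_3=7.57$ (that is, $2\log\gamma$, $3\log\alpha$, $2\log 44$ rounded up) Matveev gives $\log|\Lambda_1|>-1.93\times 10^{14}(1+\log B)$, and since $B=\max\{k,m+n\}<1.7(m+n)$ for $m+n\ge 4$ this is at least $-4.1\times 10^{14}\log(m+n)$, comfortably above the stated $-7.28\times 10^{14}\log(m+n)$.
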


\begin{proof}
Here, $\Lambda_{1}=a\gamma^{k}|\beta|^{-(m+n)}-1$, $s=3$, and $D=6$.
Further recall that we showed $B= m+n$.
Then applying Matveev's theorem we have,
\begin{align*}
\log(|\Lambda_1|) & > -1.4\times 30^{s+3}\times s^{4.5}\times D^{2}\times (1+\log D)\times (1+\log B)\times A_{1}\times A_{2}\times A_{3}\\
&> -1.4\times 30^{3+3}\times 3^{4.5}\times 6^{2}\times (1+\log 6)\times (1+\log m+n)\times 1.23 \times \log 1.82 \times 13.03\\
& >-1.4\times 7.29\times 10^{8}\times 140.296 \times 6\times 1.23 \times 1.82 \times 13.03\times (1+\log 4n)\\
&> -7.28\times 10^{14}\times \log (m+n).
\end{align*}

Recall that we had obtained an upper bound for $\Lambda_{1}$ as
\[
| \Lambda_{1} | =\Bigg|a\gamma^{k}|\beta|^{-(n+m})-1\bigg| <\frac{5.62}{|\beta|^{2m}}.
\]
Taking log of both sides
\begin{align*}
    \log | \Lambda_{1} | &<\log\bigg(\frac{5.62}{|\beta|^{2m}}\bigg)\\
    &<\log 5.62-\log |\beta|^{2m}\\
    &< \log 5.62- 2m\log |\beta|.
\end{align*}
\end{proof}

Using the bound for $\Lambda_{1}$ to get a bound for $m$,the inequality from Lemma~\ref{Lemma 3.3.2a} yields
\begin{align*}
-7.28\times 10^{14}\times \log (m+n)&< \log 5.62- 2m\log |\beta|\\
2m\log |\beta| &< \log 5.62+ 7.28\times 10^{14}(1+\log m+n)\\
m \log |\beta|&<\frac{1}{2}(\log 5.62+7.28\times 10^{14}(1+\log m+n))\\
m \log |\beta| &< 3.85\times 10^{14}(\log m+n).
\end{align*}

The following step is dedicated to attaining an absolute upper bound for $n$.
As we had done previously, we use the Binet's formula to obtain 
\begin{align*}
T_{k}&=L_{m}L_{n}\\
a\gamma^{k}+b\delta^{k}+\bar{b}\bar{\delta^{k}}& =L_{m}(\alpha^{n}+\beta^{n})\\
\frac{a\gamma^{k}}{L_{m}\beta^{n}} +\frac{b\delta^{k}}{L_{m}\beta^{n}}+\frac{\bar{b}\bar{\delta^{k}}}{L_{m}\beta^{n}}& =\frac{\alpha^{n}}{\beta^{n}}+1\\
\left|\frac{a\gamma^{k}}{L_{m}|\beta|^{n}}-1\right|&<\bigg(\frac{1}{3}+\frac{1}{|\beta|^{2}}\bigg)|\beta|^{-n}
\end{align*}
We define
\begin{equation}
\label{Lambda2 in terms of gamma and m}
\Lambda_{2} := \bigg| \bigg(\frac{a}{L_{m}}\bigg)\gamma^{k}|\beta|^{-n}-1 \bigg|
\end{equation}
and repeating the same calculations as before, we can conclude that 
\[0<\Lambda_{2} < 1.5|\beta|^{-n}\textrm{ and } 2n<11.6\times 10^{29}(\log(2n))^2.\]
 
 Applying Lemma \ref{Lemma l} with with $t=2,L=2n$ and $H=11.6\times 10^{30}$ yields 
\[
n <2.24 \times 10^{34}.
\]

The bounds we have for our indices $k,m,n$ are too huge so next we try to get better bounds using the Dujella--Peth\"{o} Lemma.

\noindent 
Define $\Gamma_1$ such that 
\[
\Lambda_{1} = |a\gamma^{k}|\beta|^{-(m+n)}-1| = |\exp(\Gamma_{1})-1| < \frac{5.62}{|\beta|^{2m}}.
\]

Specifically \[
\Gamma_{1}:= k\log \gamma - (m+n)\log|\beta|+\log a
\]

From Lemma \ref{Lemma 3}, we have that
\begin{equation}
\label{Gamma 1 bound}
0< \left|\frac{\Gamma_1}{\log |\beta|}\right| = \left|\frac{k\log \gamma}{\log|\beta|}-(m+n)+\frac{\log (a)}{\log |\beta|}\right|<23.37|\beta|^{-2m}.
\end{equation}

Applying the Dujella-Peth\"{o} lemma \ref{Dujella Petho} to the last inequality by considering $M=4.2\times 10^{34}$ ($M>k$) and $\tau=\frac{\log\gamma}{\log|\beta|}$, computing the continued fraction expansions of $\tau$ yields
\[
\frac{p_{71}}{q_{71}} = \frac{452544523220541439982411039079661113}{357364106913532334879636629737733870}
\]
This means that $6M<q_{71}=357364106913532334879636629737733870$. As a result, we obtain 
\[
\epsilon= \Vert \mu q_{47}\Vert -M\Vert\tau q_{47}\vert, \quad \epsilon > 0.325 \quad \mu_{m}=\frac{\log a}{\log|\beta|}
\]
From the inequality \eqref{Gamma 1 bound} and the Dujella-Peth\"{o} Lemma (see Lemma~\ref{Dujella Petho}), we see that $A=23.37$, $B=\alpha^2$, and $k=m$ so we conclude that $m\leq 361$.

Next consider $\Lambda_{2}$ and from \eqref{Lambda2 in terms of gamma and m}, define $\Gamma_2$ such that 
\[
\Lambda_{2} = |a/L_{m}\gamma^{k}|\beta|^{-n}-1| = |\exp(\Gamma_{2})-1| < \frac{1.5}{|\beta|^{-n}}.
\]
then we have 
\[
0 < \left|k \log \gamma - n \log |\beta| + \log \left( \frac{a}{L_m} \right)\right| < 3 |\beta|^{-n}.
\]

This implies that
\begin{equation}
\label{bound Gamma 2}
0 < \left|k\frac{\log\gamma}{\log|\beta|}-n + \frac{\log(a/L_{m})}{\log |\beta|} \right| < 8 |\beta|^{-n}.
\end{equation}

We then apply Lemma~\ref{Dujella Petho} with $k=n$
\[
\tau = \frac{\log\gamma}{\log |\beta|},\quad \mu=\frac{\log(a/L_{m})}{\log|\beta|}, \quad A = 8, \quad B = |\beta|, \quad M = 4.72 \cdot 10^{34}.
\]

With the help of Mathematica, we find that the 41st convergent of $\tau$ is
\[
\frac{p_{41}}{q_{41}} = \frac{237161759629456603958}{187281242121494666147}.
\]
It satisfies $q_{41} > 6M$ and $\epsilon > 0.273 > 0$. Hence, inequality \eqref{bound Gamma 2} has no solution for
\[
n \geq \frac{\log(8q_{41}/\epsilon)}{\log |\beta|} > 375.1
\]

Thus, we obtain $n \leq 375$.
We now check for the solutions of equation \eqref{Trib k} for $ m \leq 361$ and $n \leq 375$. This was done  with a small program in Python. 
This finishes the proof. 

\vspace{10cm}
\begin{appendices}
\section{Python Code for Verifying Solutions}

\begin{verbatim}
# Function to generate Lucas numbers up to n
def generate_lucas(n):
    lucas_numbers = [2, 1]  # L_0 = 2, L_1 = 1
    for i in range(2, n + 1):
        lucas_numbers.append(lucas_numbers[-1] + lucas_numbers[-2])
    return lucas_numbers

# Function to generate Tribonacci numbers up to n
def generate_tribonacci(n):
    trib_numbers = [0, 0, 1]  # T_0 = 0, T_1 = 0, T_2 = 1
    for i in range(3, n + 1):
        trib_numbers.append(trib_numbers[-1] + trib_numbers[-2] + trib_numbers[-3])
    return trib_numbers

# Function checks if a Tribonacci number is a product of two Lucas numbers
def find_lucas_product_for_tribonacci(trib_num, lucas_numbers):
    for i in range(len(lucas_numbers)):
        for j in range(i, len(lucas_numbers)):
            if trib_num == lucas_numbers[i] * lucas_numbers[j]:
                return (trib_num, lucas_numbers[i], lucas_numbers[j])
    return (trib_num, None, None)

# Generate Lucas and Tribonacci numbers
lucas_numbers = generate_lucas(500)  # Generate Lucas numbers up to L_50
trib_numbers = generate_tribonacci(500)  # Generate Tribonacci numbers up to T_50

# Checks if each Tribonacci number can be written as a product of two Lucas numbers
results = []
for trib_num in trib_numbers:
    result = find_lucas_product_for_tribonacci(trib_num, lucas_numbers)
    results.append(result)

# Display the results
print(f"{'Tribonacci Number':<20} {'Lucas Factor 1':<20} {'Lucas Factor 2'}")
for result in results:
    trib_num, factor1, factor2 = result
    if factor1 is not None:
        print(f"{trib_num:<20} {factor1:<20} {factor2}")
    else:
        print(f"{trib_num:<20} {'None':<20} {'None'}")

\end{verbatim}

\end{appendices}

 \bibliographystyle{elsarticle-num}
\bibliography{References}




\end{document}